\numberwithin{equation}{section}
\def\dual{\mathrm{dual}}
\def \cQ {\mathcal Q}
\renewcommand{\l}{\left}
\renewcommand{\r}{\right}
\def \Q{\mathbb{Q}}
\def \M2{\mathrm{M}_2}
\def \R{\mathbb{R}}
\def \Z{\mathbb{Z}}
\def \T{\mathbb{T}}
\def \sl2r{\mathrm{SL}(2,\R)}
\newcommand{\beq}{\begin{equation}}
\newcommand{\eeq}{\end{equation}}
\def \dirint {\int_{\T}^{\oplus}}
\def \dirintd {\int_{\T^d}^{\oplus}}
\DeclareMathOperator*{\slim}{s--lim}
\def\dom{\operatorname{Dom}}
\def\wt{\widetilde}
\def\wt{\widetilde}
\def\ess{\operatorname{ess}}
\newcommand{\eqdef}{\stackrel{\rm def}{=\kern-3.6pt=}}
	\newcommand{\<}{\langle}
\renewcommand{\>}{\rangle}
\theoremstyle{plain}
\newtheorem{theorem}{\bf Theorem}[section]
\newtheorem{lemma}[theorem]{\bf Lemma}
\newtheorem{prop}[theorem]{\bf Proposition}
\newtheorem{cor}[theorem]{\bf Corollary}
\theoremstyle{definition}
\newtheorem{defi}[theorem]{\bf Definition}
\theoremstyle{remark}
\newtheorem{remark}[theorem]{\bf Remark}
\renewcommand{\le}{\leqslant}
\renewcommand{\ge}{\geqslant}
\newcommand{\dist}{\mathop{\mathrm{dist}}\nolimits}
\newcommand{\dc}{\mathop{\mathrm{DC}}\nolimits}
\title[Ballistic transport for quasiperiodic operators]{On the relation between strong ballistic transport and exponential dynamical localization}
\author[I. Kachkovskiy]{Ilya Kachkovskiy}
\address{Department of Mathematics,
	Michigan State University,
	Wells Hall, 619 Red Cedar Road,
	East Lansing, MI 48824,
	United States of America}
\email{ikachkov@msu.edu}
\date{}
\begin{document}
\begin{abstract}
We establish strong ballistic transport for a family of discrete quasiperiodic Schr\"odinger operators as a consequence of exponential dynamical localization for the dual family. The latter has been, essentially, shown by Jitomirskaya and Kr\"uger in the one-frequency setting and by Ge--You--Zhou in the multi-frequency case. In both regimes, we obtain strong convergence of $\frac{1}{T}X(T)$ to the asymptotic velocity operator $Q$, which improves recent perturbative results by Zhao and provides the strongest known form of ballistic motion. In the one-frequency setting, this approach allows to treat Diophantine frequencies non-perturbatively and also consider the weakly Liouville case.
\end{abstract}
\maketitle
\section{Introduction and main results}
In this paper, we consider the following class of multi-frequency quasiperiodic operators on $\ell^2(\mathbb Z)$:
\beq
\label{h_def}
(H(x)\psi)(n)=\psi(n+1)+\psi(n-1)+\varepsilon v(x+n\alpha)\psi(n),\quad x,\alpha\in \T^d,\quad n\in \Z.
\eeq
where
$$
n\alpha=(\{n\alpha_1\},\ldots,\{n\alpha_d\})\in \T^d.
$$
We identify $\T^d=(\R/\Z)^d$ with $[0,1)^d$ and assume that $v\in C^{\omega}(\T^d;\R)$ is a real analytic potential (considered also as a $\Z^d$-periodic function on $\R^d$). Here $\alpha=(\alpha_1,\ldots,\alpha_d)$ is the frequency vector such that $\{1,\alpha_1,\ldots,\alpha_d\}$ are independent over $\Q$. Whenever we introduce an abstract concept that does not use quasiperiodic specifics, we will use the notation $H$ for the Schr\"odinger operator.

The position operator is defined on the natural domain of definition in $\ell^2(\Z)$ by
\beq
\label{X_def}
(X\psi)(n)=n\psi(n),
\eeq
and its Heisenberg evolution can be represented as
\beq
\label{XT_def}
X(T)=e^{iTH}Xe^{-iTH}=X+\int_0^T e^{itH}Ae^{-itH}\,dt,\quad T\in \R,
\eeq
where
\beq
\label{A_def}
A\psi(n)=i(\psi(n+1)-\psi(n-1)).
\eeq
Since $A$ is bounded, \eqref{XT_def} implies that $X=X(0)$ and $X(T)$ have the same domain. We will be interested in computing the limits
\beq
\label{X_lim}
\lim\limits_{T\to +\infty}\frac{1}{T}X(T)\psi_0,
\eeq
where $\psi_0\in \dom(X)$. One can consider the limit \eqref{X_lim}, if it exists, as the ``asymptotic velocity'' of the state $\psi_0$ at infinite time. The {\it asymptotic velocity operator} is defined as
\beq
\label{Q_def}
Q=\slim\limits_{T\to +\infty}\frac{1}{T} X(T)=\slim\limits_{T\to +\infty}\frac{1}{T}\int_0^T e^{itH}Ae^{-itH}\,dt. 
\eeq
The first limit is only defined on a dense set, but it is natural to remove the term $\frac{1}{T}X(0)$ and consider only the right hand side. We say that a Schr\"odinger operator $H$ demonstrates {\it strong ballistic transport}, if the right hand side of \eqref{Q_def} converges on $\ell^2(\Z)$ and $\ker Q=\{0\}$. Strong ballistic transport immediately implies
\beq
\label{X_moment}
\|X(T)\psi_0\|\ge c(\psi_0)|T|,\quad |T|\gg 1,\quad \psi\in \dom(X).
\eeq
Ballistic motion is recognized as one of the manifestations of absolutely continuous spectrum. Originally, it was studied in the Ces\`aro averaged sense, see, for example \cite{Last} and references therein. The non-averaged lower bounds for $X(T)$ were first found in \cite{Knauf} for periodic operators in the continuum. Later, they were extended in \cite{Lukic} to the discrete Jacobi matrix case, motivated by applications to XY spin chains. Some anomalous bounds for Fibonacci type Hamiltonians were found in \cite{Lukic2}. In the quasiperiodic case, an $x$-averaged version of ballistic transport was obtained in \cite{K} by the duality method based on \cite{JK}. As a consequence, one can still obtain lower bounds on Lieb--Robinson velocity for the XY chain, but the actual ballistic transport would only be proved for a sequence of time scales. In the same year, a different approach was developed in \cite{Zhao1} in order to obtain bounds of type \eqref{X_moment} in the perturbative setting. It does not require considering a sequence of time scales, but fall short of \eqref{X_lim}. The KAM method of \cite{Zhao1} was later developed in \cite{Zhao2} to treat the one-frequency Liouvillean case, by further weakening \eqref{X_moment} to a bound on the transport exponent. The limit-periodic case was studied in \cite{Fillman} where an analogue of \eqref{X_lim} was obtained.

While \eqref{X_moment} is already a very strong condition, the convergence statement \eqref{X_lim} is more desirable, since it shows that the wavepacket takes a particular asymptotic shape at large times, assuming it is properly rescaled. One can compare this process with localization. In the quasiperiodic case, one of the results of \cite{K} is the calculation of the asymptotic velocity operator $Q(x)$, but, since it is only obtained on a sequence of time scales, one cannot exclude the possibility of large oscillations. Moreover, \cite{K} predicts a possible mechanism of convergence: after applying duality, it becomes a procedure of diagonal truncation of an operator dual to \eqref{A_def} in the basis of the eigenvectors of the dual Hamiltonian with point spectrum. The convergence of the truncation is only obtained in the dual $L^2$ direct integral space (in other words, averaged over $\theta$), which is not enough to guarantee pointwise strong convergence in the original direct integral space. A natural question arises: can we improve it? In order to obtain a pointwise bound (say, $L^{\infty}$ in the $x$ variable), can try to obtain an $\ell^1$ bound in the dual $\Z^d$ variable. Clearly, if we truncate the dual $\Z^d$ space, then $\ell^1$ bound would follow from $\ell^2$ bound, which is already obtained in \cite{K}. It turns out that the missing ingredient is a uniform $\ell^1$ bound on the tails, which is an extra property that we require from the dual model. This property follows from exponential dynamical localization and has been established in \cite{Kruger} and \cite{EDL}.

As usual, we call a frequency vector $\alpha$ {\it Diophantine} (denoted $\alpha\in \dc(c,\tau)$ for some $c,\tau>0$) if
\beq
\label{dc_def}
\dist(k\cdot\alpha,\mathbb Z)\ge c|k|^{-\tau},\quad \forall k\in \Z^d\setminus\{0\}.
\eeq
We also use the notation $\dc=\cup_{c,\tau>0}\dc(c,\tau)$. For $\alpha\in \R\setminus\Q$, denote also
$$
\beta(\alpha)=\limsup_{k\to \infty}\frac{\ln q_{k+1}}{q_k},
$$
where $\{q_k\}$ is the sequence of continued fraction approximants of $\alpha$. Note that $\alpha\in \dc$ implies $\beta(\alpha)=0$, but not vice versa. The main result of the paper is Theorem \ref{main_abstract}, which establishes strong ballistic transport as a consequence of exponential dynamical localization for the dual operator. We postpone the complete setup to Section 2, and formulate two main corollaries.
\begin{cor}
\label{cor_1}
Suppose that $d=1$, $v\in C^{\omega}(\T)$, $0<\beta(\alpha)<+\infty$. There exists $\varepsilon_0=\varepsilon_0(v,\beta)>0$ such that, for $0<\varepsilon<\varepsilon_0$, the operator $H(x)$ \eqref{h_def} has strong ballistic transport for a.e. $x\in \T$.
\end{cor}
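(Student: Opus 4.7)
The plan is to deduce Corollary \ref{cor_1} as a direct application of the abstract Theorem \ref{main_abstract}, which, as explained in the introduction, reduces strong ballistic transport for the family $H(x)$ to exponential dynamical localization (EDL) for its Aubry dual, together with a uniform-in-phase $\ell^1$-tail estimate on the dual eigenfunctions. The two items I need to provide are (i) a clean identification of the dual model and (ii) an input from the literature giving EDL in the required form.

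For $d=1$ the Aubry dual of \eqref{h_def} is the long-range quasiperiodic operator on $\ell^2(\Z)$ with diagonal term $2\cos(2\pi(\theta+n\alpha))$ and off-diagonal hopping $\varepsilon\hat v(n-m)$, depending on a dual phase $\theta\in\T$. The standard direct-integral decomposition, obtained via a combined Fourier transform in $x$ and in the lattice variable $n$, turns ``a.e.\ $\theta$'' statements on the dual side into ``a.e.\ $x$'' statements on the primal side and is therefore responsible for the ``a.e.\ $x$'' conclusion in the corollary. For the EDL input I would invoke the theorem of Jitomirskaya--Kr\"uger \cite{Kruger}: for $v\in C^\omega(\T)$, $0<\beta(\alpha)<\infty$, and $\varepsilon<\varepsilon_0(v,\beta)$, the dual family enjoys exponential dynamical localization with rates strong enough to furnish the uniform $\ell^1$-tail estimates on the eigenfunctions required by Theorem \ref{main_abstract}. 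Feeding this into the abstract theorem yields strong operator convergence of $T^{-1}X(T)$ to the asymptotic velocity operator $Q$ together with $\ker Q=\{0\}$, for a.e.\ $x\in\T$, which is exactly the statement of the corollary.

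The hard part really belongs to Theorem \ref{main_abstract} itself rather than to this corollary: it is the passage from $\ell^2$ to $\ell^1$ tail control in the dual variable, singled out in the introduction as the decisive new ingredient beyond \cite{K}. Concretely, one has to verify that the Kr\"uger-type EDL of \cite{Kruger} supplies a bound of the form $\|P_{|n|>N}\varphi_{\theta,j}\|_{\ell^1}\le C(\theta)e^{-cN}$ on the localized eigenfunctions $\varphi_{\theta,j}$, uniformly in the eigenvalue index $j$ and with $C(\theta)$ measurable and integrable in $\theta$. Extracting such a bound from EDL is conceptually routine but quantitatively delicate, as it requires tracking both the exponential decay rate and the phase-dependence of the constant; this is the step where the ``exponential'' in EDL (as opposed to merely polynomial or sub-exponential dynamical localization) is used in an essential way. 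Once this bound is in hand, the rest of the argument for Corollary \ref{cor_1} is structural and is carried out entirely inside Theorem \ref{main_abstract}.
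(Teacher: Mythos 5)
Your overall route coincides with the paper's: Corollary \ref{cor_1} is obtained by feeding the one-frequency, weakly Liouville EDL statement (Proposition \ref{prop_AJ_Kruger}, based on \cite{Kruger}) into the abstract Theorem \ref{main_abstract}, with the duality/direct-integral bookkeeping converting a.e.\ $\theta$ information into an a.e.\ $x$ conclusion. However, you misstate the interface between the two ingredients, and this matters for what you claim still "has to be verified." The hypothesis of Theorem \ref{main_abstract} is exactly exponential dynamical localization \emph{in expectation}, i.e.\ the bound \eqref{eq_edl} on $\int_{\T}\sup_{t}|\<\delta_k,e^{-it\wt H(\theta)}\delta_{\ell}\>|\,d\theta$; it does \emph{not} require a uniform-in-phase, uniform-in-index $\ell^1$-tail bound $\|P_N^{\perp}\varphi_{\theta,j}\|_{\ell^1}\le C(\theta)e^{-cN}$ on the dual eigenfunctions with $C(\theta)$ integrable. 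The $\ell^1$-tail control you single out as the "quantitatively delicate" outstanding step is Lemma \ref{lemma_bound_tail}, which lives entirely inside the proof of Theorem \ref{main_abstract} and is derived directly from \eqref{eq_edl} applied to the unitary group $e^{-it\wt H(\theta)}$ (Cauchy--Schwarz manipulations on $\<\delta_n,e^{i\wt H(\theta)t}\wt A(\theta)e^{-i\wt H(\theta)t}\delta_k\>$), with no eigenfunction expansion and no pointwise-in-$\theta$ constants; so at the level of the corollary nothing remains to be checked beyond citing the EDL input, and the eigenfunction route you sketch is an unnecessary detour that is not obviously recoverable from \eqref{eq_edl} alone. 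The one point you do gloss over is the citation itself: \cite{Kruger} proves EDL for the almost Mathieu operator, not for the dual of a general $v\in C^{\omega}(\T)$; Proposition \ref{prop_AJ_Kruger} rests on the observation that the key input (Theorem 5.1 of \cite{AJ}) is available for general non-local operators and that the rest of the Jitomirskaya--Kr\"uger argument repeats verbatim, and your citation step should say this explicitly.
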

\begin{cor}
\label{cor_2}
Suppose that $v\in C^{\omega}(\T^d)$, $\alpha\in\dc$. There exists $\varepsilon_0=\varepsilon_0(v,\alpha)>0$ such that, for $0<\varepsilon<\varepsilon_0$, the operator $H(x)$ \eqref{h_def} has strong ballistic transport for a.e. $x\in \T^d$.
\end{cor}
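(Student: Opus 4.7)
The plan is to obtain Corollary \ref{cor_2} as a direct application of the abstract Theorem \ref{main_abstract}. As the introduction explains, that theorem reduces strong ballistic transport of $H(x)$ to exponential dynamical localization (EDL) for the Aubry dual family. So the work splits into two pieces: identify the dual model with the appropriate direct integral structure, and verify EDL for it in the multi-frequency Diophantine regime.

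For the first piece, I would carry out the standard multi-frequency Aubry duality, in the framework of \cite{K}. The direct integral $\dirintd H(x)\,dx$ is unitarily equivalent, after a Fourier transform in the covariance variable, to a direct integral over a single torus variable $\theta\in\T$ of long-range self-adjoint operators $\widehat H(\theta)$ on $\ell^2(\Z^d)$. The diagonal part of $\widehat H(\theta)$ is a Diophantine rotation in the lattice variable $m\in\Z^d$, and its off-diagonal hopping is encoded by the Fourier coefficients of $v$, which decay exponentially because $v\in C^{\omega}(\T^d)$. Thus, for small $\varepsilon$, the dual is a small analytic long-range perturbation of a pure rotation.

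For the second piece, I would invoke the EDL result of Ge--You--Zhou cited as \cite{EDL}, which is tailored to the analytic, Diophantine, multi-frequency setting. It supplies $\varepsilon_0=\varepsilon_0(v,\alpha)>0$ such that, for $0<\varepsilon<\varepsilon_0$, the dual family satisfies a uniform-in-$\theta$ exponential kernel bound
\[
\sup_{\theta\in\T}\bigl|\<\delta_m,e^{-it\widehat H(\theta)}\delta_n\>\bigr|\le C e^{-c|m-n|},\quad m,n\in\Z^d,\ t\in\R,
\]
for some $C,c>0$, which is precisely the hypothesis consumed by Theorem \ref{main_abstract}. Applying that theorem then produces strong ballistic transport for $H(x)$ at almost every $x\in\T^d$; the a.e.\ qualification arises from the measurability of the fiber identification between the two direct integral decompositions.

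The main obstacle I anticipate is not the application of Theorem \ref{main_abstract} itself, but the interface between the EDL statement and its exact hypothesis. The introduction points out that what was missing in \cite{K} to obtain pointwise-in-$x$ convergence of $\tfrac{1}{T}X(T)$ was a uniform $\ell^1$ (rather than $\ell^2$) tail bound on the dual spectral projectors. Confirming that the EDL kernel bound above transcribes cleanly into such a uniform $\ell^1$ control on eigenfunction tails, and that this control survives the unitary return to the $x$ variable, is the substantive technical point. Once it is in hand, no further KAM-type argument is needed in the small-$\varepsilon$ regime, and Corollary \ref{cor_2} follows.
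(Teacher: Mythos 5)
Your overall route is exactly the paper's: Corollary \ref{cor_2} is obtained by feeding the Ge--You--Zhou result (Proposition \ref{prop_EDL}), which gives EDL for the dual long-range family \eqref{hdual_def} for $\alpha\in\dc$ and $0<\varepsilon<\varepsilon_0(v,\alpha)$, into the abstract Theorem \ref{main_abstract}; no further argument is needed or given in the paper.

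One correction, though, because as written your quotation of the localization input is false and, if it were really needed, would break the proof. You state that \cite{EDL} supplies a bound of the form
\[
\sup_{\theta\in\T}\bigl|\<\delta_m,e^{-it\wt H(\theta)}\delta_n\>\bigr|\le C e^{-c|m-n|},
\]
i.e.\ uniform in $\theta$. Such uniform dynamical localization does not hold in this setting: the dual family has purely point spectrum only for a.e.\ $\theta$, and for resonant phases no exponential bound uniform in $t$ is available; uniform localization is known to fail for quasiperiodic models of this type. What Ge--You--Zhou prove, and what the paper's Definition of EDL (see \eqref{eq_edl}) actually requires, is the \emph{in expectation} bound
\[
\int_{\T}\sup_{t\in\R}\bigl|\<\delta_k,e^{-it\wt H(\theta)}\delta_\ell\>\bigr|\,d\theta\le Ce^{-\gamma|k-\ell|},
\]
and this is precisely the hypothesis consumed in Lemma \ref{lemma_bound_tail} (the $\theta$-integral there is exactly what makes the averaged bound sufficient). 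So your argument is fine once you replace the uniform-in-$\theta$ statement by the $\theta$-averaged one; with that substitution the proof coincides with the paper's. Your closing concern about transcribing the kernel bound into $\ell^1$ tail control and returning to the $x$ variable is the content of Lemma \ref{lemma_bound_tail}, Corollary \ref{cor_conv2}, and the duality computation in the proof of Theorem \ref{main_abstract}, so it is already handled inside the abstract theorem rather than being an additional task for the corollary.
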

A version of Corollary \ref{cor_2} with \eqref{X_moment} instead of \eqref{X_lim} was obtained in \cite{Zhao1}.
\section{Preliminaries and the main result}
The proof will refine convergence bounds from \cite{K}, part of which is based on Aubry duality. Let $(\hat{v}\ast\cdot)\colon\Z^d\to \Z^d$ be the convolution operator
\beq
\label{vh_def}
(\hat{v}\ast \psi)(n)=\sum\limits_{m\in \Z^d}\hat{v}(n-m)\psi(m),
\eeq
where
$$
v(x)=\sum\limits_{m\in \Z^d} \hat{v}(m) e^{2\pi i m \cdot x}
$$
is the usual Fourier series.
The dual operator family $\wt {H}(\theta)$ is defined by
\beq
\label{hdual_def}
(\wt {H}(\theta)\psi)(m)=\varepsilon(\hat{v}\ast \psi)(m)+2\cos 2\pi (\theta+m\cdot\alpha)\psi(m),\quad \theta\in \T^1=[0,1),\quad m\in \Z^d.
\eeq
Denote the corresponding direct integral spaces (for $H$ and $\wt H$ respectively) by
$$
\mathfrak{H}:=\dirintd \ell^2(\Z)\,dx,\quad \widetilde{\mathfrak{H}}= {\int_{\T}^{\oplus}\ell^2(\Z^d)}\,d\theta.
$$
The unitary duality operator $\mathcal U\colon \mathfrak{H}\to \widetilde{\mathfrak{H}}$ is defined on functions $\Psi=\Psi(x,n)$ as
\beq
\label{u_def}
(\mathcal {U} \Psi)(\theta,m)=\wt{\Psi}(m,\theta+\alpha\cdot m),
\eeq
where $\wt\Psi$ denotes the Fourier transform over $x\in \T^d\to m\in \Z^d$ combined with the inverse Fourier transform $n\in \Z\to \theta\in \T$:
$$
\wt{\Psi}(m,\theta)=\sum\limits_{n\in \Z}\int\limits_{\T}e^{2\pi i n\theta-2\pi i m x}\Psi(n,x)\,dx.
$$
Let also
$$
\mathcal{H}:=\dirintd H(x)\,dx,\quad \wt{\mathcal{H}}:=\dirint \wt H(\theta)\,d\theta.
$$
Aubry duality (see, for example, \cite{GJLS}) can be formulated as the unitary equivalence of direct integrals
\beq
\label{Duality}
\mathcal{U} \mathcal H \mathcal U^{-1}=\wt{\mathcal H}.
\eeq 
In fact, any operator on $\mathfrak H$ has a dual counterpart, defined in a similar way. The dual version of the operator $A$ is a decomposable operator:
\beq
\label{A_dual}
(\wt A(\theta)\psi)(m)=2\sin 2\pi (m\cdot\alpha+\theta)\psi(m),\quad m\in \Z^d.
\eeq
\subsection{Strong ballistic transport in expectation} 
Denote by
\beq
\label{Q_T_def}
Q(x,T)=\frac{1}{T}\int_0^T e^{iH(x)t} Ae^{-i H(x)t}\,dt,\quad \wt Q(\theta,T)=\frac{1}{T}\int_0^T e^{i\wt H(\theta)t}\wt A(\theta)e^{-i\wt H(\theta)t}\,dt,
\eeq
and the corresponding direct integrals
$$
{\mathcal Q}(T)=\dirintd Q(x,T)\,dx,\quad \wt {\mathcal Q}(T)=\dirint \wt Q(\theta,T)\,d\theta.
$$
The following result is, essentially, established in \cite{K}.
\begin{prop}
\label{prop_K}
Suppose that the family $\wt H(\theta)$ has purely point spectrum for a.e. $\theta$. Then, for a.e. $\theta$, the following limit exists:
$$
\wt Q(\theta)=\slim\limits_{T\to +\infty}\frac{1}{T}\int_0^T e^{i\wt H(\theta)t}\wt A(\theta)e^{-i\wt H(\theta)t}\,dt.
$$
Moreover, the operator $\wt Q(\theta)$ is the diagonal part of $\wt A(\theta)$ with respect to  any orthonormal basis of eigenfunctions $\{\psi_k(\theta)\}$ of $\wt H(\theta)$:
$$
\wt Q(\theta)\psi_k(\theta)=\<\wt A(\theta)\psi_k(\theta),\psi_k(\theta)\>\psi_k(\theta),
$$
and $\ker \wt Q\neq \{0\}$ for a.e. $\theta$. As a consequence, there exist decomposable operators $\cQ$, $\wt \cQ$:
$$
\cQ=\slim\limits_{T\to +\infty}\cQ(T),\quad \wt \cQ=\slim\limits_{T\to +\infty}\wt\cQ(T);\quad \ker\cQ=\ker\wt \cQ=\{0\}.
$$
\end{prop}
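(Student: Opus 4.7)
The overall plan is fiber-by-fiber: for a.e.\ $\theta$, establish strong convergence of $\wt Q(\theta,T)$ on $\ell^2(\Z^d)$ with an explicit formula for the limit; lift this to strong convergence in $\widetilde{\mathfrak H}$ by dominated convergence; and transfer to $\cQ$ via Aubry duality. Fixing $\theta$ so that $\wt H(\theta)$ has pure point spectrum and letting $\{\psi_k\}$ be an orthonormal eigenbasis with eigenvalues $\{E_k\}$, matrix elements in this basis satisfy
$$
\<\wt Q(\theta,T)\psi_k,\psi_j\>=\Bigl(\tfrac{1}{T}\int_0^T e^{i(E_j-E_k)t}\,dt\Bigr)\<\wt A(\theta)\psi_k,\psi_j\>\xrightarrow{T\to\infty}\mathbf{1}_{\{E_j=E_k\}}\<\wt A(\theta)\psi_k,\psi_j\>.
$$
Since $\|\wt Q(\theta,T)\|\le\|\wt A(\theta)\|\le 2$ uniformly, this termwise convergence, combined with an $\varepsilon/3$ approximation by finite linear combinations of eigenfunctions, upgrades to strong convergence $\wt Q(\theta,T)\varphi\to\wt Q(\theta)\varphi$ for every $\varphi\in\ell^2(\Z^d)$, where $\wt Q(\theta)$ preserves each eigenspace of $\wt H(\theta)$ and restricts there to the compression of $\wt A(\theta)$. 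When eigenvalues are simple (the generic case under Anderson localization), this reduces to the claimed diagonal formula.

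For the direct-integral step, I would apply dominated convergence: for $\Phi\in\widetilde{\mathfrak H}$, the integrand $\|\wt Q(\theta,T)\Phi(\theta)-\wt Q(\theta)\Phi(\theta)\|^2$ tends to zero a.e.\ in $\theta$ and is majorized by $16\|\Phi(\theta)\|^2$, giving $\wt\cQ(T)\Phi\to\wt\cQ\Phi$ in $\widetilde{\mathfrak H}$. Aubry duality \eqref{Duality}, together with the companion pairing $\mathcal U\bigl(\int_{\T^d}^{\oplus}A\,dx\bigr)\mathcal U^{-1}=\int_{\T}^{\oplus}\wt A(\theta)\,d\theta$ implicit in \eqref{A_dual}, then gives $\mathcal U\cQ(T)\mathcal U^{-1}=\wt\cQ(T)$, so $\cQ=\mathcal U^{-1}\wt\cQ\,\mathcal U$ also exists as a strong limit and inherits the kernel property of $\wt\cQ$.

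The main obstacle is the kernel statement. In the diagonal description, triviality of $\ker\wt{\cQ}$ is equivalent to $\ker\wt Q(\theta)=\{0\}$ for a.e.\ $\theta$, which in turn amounts to the non-vanishing of every diagonal matrix element $\<\wt A(\theta)\psi_k(\theta),\psi_k(\theta)\>$. The Hellmann--Feynman identity $\<\wt A(\theta)\psi_k,\psi_k\>=-\tfrac{1}{2\pi}\partial_\theta E_k(\theta)$, following from $\wt A(\theta)=-\tfrac{1}{2\pi}\partial_\theta\wt H(\theta)$, reduces the problem to strict monotonicity of each eigenvalue branch $E_k(\cdot)$ labeled through the integrated density of states, with uniform control across all $k$. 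This delicate piece of bookkeeping is precisely what is already worked out in \cite{K}, and I would import it directly rather than re-derive it here.
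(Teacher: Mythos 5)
Your overall route is the same one the paper takes: the paper does not reprove Proposition \ref{prop_K} but defers it to \cite{K} (with the triviality of $\ker\wt\cQ$ traced to Appendix C of \cite{AJM} and Remark 5.1 of \cite{JK}), and your sketch reconstructs exactly that argument --- fiberwise diagonal truncation of $\wt A(\theta)$ in the eigenbasis of $\wt H(\theta)$, dominated convergence in the direct integral, the duality identity $\mathcal U\cQ(T)\mathcal U^{-1}=\wt\cQ(T)$, and importing the Hellmann--Feynman/monotonicity input for the kernel rather than re-deriving it. Two remarks on the statement itself: the clause ``$\ker\wt Q\neq\{0\}$ for a.e.\ $\theta$'' must be read as $\ker\wt Q(\theta)=\{0\}$ for a.e.\ $\theta$ (this is what your reduction addresses and what the asserted consequence $\ker\wt\cQ=\{0\}$ requires; nontrivial fiber kernels on a positive-measure set would, by measurable selection, produce a nonzero element of $\ker\wt\cQ$), and your caveat about degenerate eigenvalues is fair --- the ``any orthonormal eigenbasis'' formulation implicitly uses simplicity, but only the eigenspace-compression description, the existence of the limits, and $\ker\cQ=\{0\}$ are used later, so this does not affect the application.

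The one step that does not work as written is the upgrade from matrix-element convergence to strong convergence. Convergence of matrix elements together with the uniform bound $\|\wt Q(\theta,T)\|\le 2$ and an $\varepsilon/3$ density argument yields only weak operator convergence; the $\varepsilon/3$ scheme needs \emph{norm} convergence on the dense set of finite linear combinations of eigenfunctions, which is not a consequence of termwise convergence of $\<\wt Q(\theta,T)\psi_k,\psi_j\>$. The missing (standard) ingredient is von Neumann's mean ergodic theorem: for an eigenvector $\psi_k$ with $\wt H(\theta)\psi_k=E_k\psi_k$ one has $\wt Q(\theta,T)\psi_k=\frac{1}{T}\int_0^T e^{i(\wt H(\theta)-E_k)t}\,\wt A(\theta)\psi_k\,dt$, and the spectral theorem plus dominated convergence applied to the spectral measure of $\wt A(\theta)\psi_k$ gives norm convergence to $P_{\{E_k\}}\wt A(\theta)\psi_k$, with $P_{\{E_k\}}$ the eigenprojection. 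With that one-line repair, the rest of your argument (uniform bound and density, dominated convergence over $\theta$, duality transfer, decomposability of the limit because it commutes with the diagonal algebra, and the imported kernel statement) goes through and matches the proof the paper points to in \cite{K}.
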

\begin{remark}
In \cite{K}, Proposition \ref{prop_K} was formulated in a slightly different setting, assuming that the family $H(x)$ satisfies $L^2$ degree zero reducibility condition. However, one can check that the same proof follows through. In particular, the proof of the fact $\ker\wt \cQ=\{0\}$ is done exactly the same way as in Appendix C of  \cite{AJM}, see also Remark 5.1 in \cite{JK}.
\end{remark}
Once the convergence in $L^2$ is obtained, one can apply a diagonal procedure to establish the following fact: there is a sequence of time scales $T_k\to \infty$ as $k\to\infty$ such that, for almost every $x\in \T^d$, $Q(x,T_k)$ converges to $Q(x)$ strongly.

\subsection{Main results} Our goal is to improve the convergence of $\wt \cQ(T)$. This requires some additional information about the dual operator. Denote by $\{\delta_k\colon k\in \Z^d\}$ the standard basis in $\Z^d$.
\begin{defi}
We say that the family $\{\wt H(\theta)\}$ satisfies {\it exponential dynamical localization in expectation} if the spectra of $\wt H(\theta)$ are purely point for a.e. $\theta\in \T$, and the following bound holds with some constants $C,\gamma>0$:
\beq
\label{eq_edl}
\int\limits_{\T}\sup\limits_{t\in \R}|\<\delta_k,e^{-it\wt H(\theta)}\delta_{\ell}\>|d\theta\le Ce^{-\gamma|k-l|}.
\eeq
\end{defi}
It turns out that EDL is the missing ingredient for establishing ``true'' strong ballistic transport \eqref{X_lim}. The following is the main result of the present paper.
\begin{theorem}
\label{main_abstract}
Suppose that the family $\{\wt H(\theta)\}$ satisfies EDL. Then, for almost every $x\in\T^d$, the operator $H(x)$ has strong ballistic transport.
\end{theorem}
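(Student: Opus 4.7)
The plan is to upgrade the $L^2_x$-strong convergence of $\cQ(T)$ from Proposition \ref{prop_K} to pointwise strong convergence on $\ell^2(\Z)$ for a.e.\ $x\in\T^d$, by using EDL to produce uniform $\ell^1$-type decay in the dual spatial variable $m\in\Z^d$. The condition $\ker Q(x)=\{0\}$ for a.e.\ $x$ follows from $\ker\cQ=\{0\}$ (Proposition \ref{prop_K}) by a measurable selection argument. Since $\|Q(x,T)\|\le\|A\|=2$ uniformly, a density argument reduces strong convergence to proving
\[
\|(Q(x,T)-Q(x))\delta_n\|_{\ell^2(\Z)}\to 0\quad\text{as }T\to\infty,
\]
for each $n\in\Z$ and a.e.\ $x\in\T^d$.

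\textbf{Duality and Fourier expansion.} The vector $\Psi_n(k,x):=\delta_{k,n}$ (constant in $x$) lies in $\mathfrak{H}$ and satisfies $(\mathcal{U}\Psi_n)(\theta,m)=e^{2\pi in\theta}\delta_{m,0}$ by a direct computation from \eqref{u_def}. Since $\wt\cQ(T)$ is decomposable, $(\wt\cQ(T)\mathcal{U}\Psi_n)(\theta,m)=e^{2\pi in\theta}\<\delta_m,\wt Q(\theta,T)\delta_0\>$. Inverting $\mathcal{U}$ produces the absolutely convergent expansion
\[
\<\delta_k,Q(x,T)\delta_n\>=\sum_{m\in\Z^d}e^{2\pi i m\cdot(x-k\alpha)}\int_{\T}e^{2\pi i(n-k)\theta}\<\delta_m,\wt Q(\theta,T)\delta_0\>\,d\theta.
\]
Minkowski's inequality in $\ell^2_k$ combined with Parseval in $\theta$ (and $|e^{2\pi i\cdot}|=1$) then gives the crucial bound \emph{uniform in $x$}:
\[
\|(Q(x,T)-Q(x))\delta_n\|_{\ell^2(\Z)}\le\sum_{m\in\Z^d}\|\<\delta_m,(\wt Q(\cdot,T)-\wt Q(\cdot))\delta_0\>\|_{L^2(\T)}.
\]

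\textbf{Uniform tail estimate from EDL.} For each fixed $m$, Proposition \ref{prop_K} and dominated convergence (with $\|\wt Q(\theta,T)\delta_0\|\le 2$) yield $\|\<\delta_m,(\wt Q(\cdot,T)-\wt Q(\cdot))\delta_0\>\|_{L^2(\T)}\to 0$ as $T\to\infty$. To apply dominated convergence on $\ell^1_m$, a $T$-uniform summable majorant is needed. Setting $F_{p,q}(\theta):=\sup_t|\<\delta_p,e^{-it\wt H(\theta)}\delta_q\>|$ and expanding the Heisenberg evolution in the basis $\{\delta_{m'}\}$,
\[
|\<\delta_m,\wt Q(\theta,T)\delta_0\>|\le 2\sum_{m'\in\Z^d}F_{m',m}(\theta)F_{m',0}(\theta).
\]
The unitarity bounds $F_{p,q}\le 1$ and $|\<\delta_m,\wt Q(\theta,T)\delta_0\>|\le 2$ give $F_{p,q}^2\le F_{p,q}$ and $|\cdot|^2\le 2|\cdot|$; together with Cauchy--Schwarz in $\theta$ and EDL \eqref{eq_edl} this yields
\[
\|\<\delta_m,\wt Q(\cdot,T)\delta_0\>\|_{L^2(\T)}^2\le 4\sum_{m'\in\Z^d}\int_{\T}F_{m',m}F_{m',0}\,d\theta\le C\sum_{m'\in\Z^d}e^{-\gamma(|m'-m|+|m'|)/2}.
\]
Since $|m'-m|+|m'|\ge|m|$, this is exponentially small in $|m|$ and summable over $m\in\Z^d$, uniformly in $T$; the same bound holds for $\wt Q(\theta)$. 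Dominated convergence on $\ell^1_m$ then delivers the limit, and strong ballistic transport follows.

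\textbf{Main obstacle.} The delicate point is the $L^2_\theta$ tail estimate above: \eqref{eq_edl} supplies only $L^1_\theta$ control, whereas the Minkowski/Parseval bound requires $L^2_\theta$ information on matrix elements. The ``$L^1\cap L^\infty\hookrightarrow L^2$'' interpolation via the trivial bounds $F_{p,q}\le 1$ and $\|\wt Q(\theta,T)\|\le 2$ is the mechanism that converts averaged EDL into uniform-in-$T$ exponential decay of $\|\<\delta_m,\wt Q(\cdot,T)\delta_0\>\|_{L^2(\T)}$ in $|m|$, which the Fourier identity of the second paragraph then translates into pointwise-in-$x$ ballistic transport.
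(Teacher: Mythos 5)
Your proposal is correct and follows essentially the same route as the paper: the same duality computation identifies $Q(x,T)\delta_n$ with the fiberwise action of $\wt Q(\theta,T)$ on $e^{2\pi i n\theta}\delta_0$, and the same interpolation of EDL (trivial bounds $F_{p,q}\le 1$, $|\<\delta_m,\wt Q(\theta,T)\delta_0\>|\le 2$, expansion over intermediate sites, Cauchy--Schwarz in $\theta$) produces the $T$-uniform exponential decay that the paper packages as Lemma \ref{lemma_bound_tail} and Corollary \ref{cor_conv2}. The only differences are organizational: you dominate $\sup_x$ by the $\ell^1_m(L^2_\theta)$ norm of the dual matrix elements (which makes the shear $\theta_m=m\cdot\alpha$ and the space $L^{21}_{\dual}$ unnecessary) and conclude by dominated convergence in $m$, whereas the paper uses the sheared $L^2_\theta(\ell^1_m)$ norm together with an explicit $P_N$/$P_N^{\perp}$ truncation; the two bookkeepings are equivalent up to Minkowski's inequality.
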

There are two cases in which EDL is established. The first one is the weakly Liouvillean one-frequency case. In \cite{Kruger}, it is obtained for the almost Mathieu operator. However, the proof relies on Theorem 5.1 from \cite{AJ} which has, conveniently, been obtained for the general non-local case, and the rest of the argument can be repeated verbatim. See also \cite{Germinet} for earlier application of the method and \cite{lanawenkru} for a significantly refined result for the almost Mathieu operator.
\begin{prop}
\label{prop_AJ_Kruger}
Fix $v\in C^{\omega}(\T)$ and $\beta>0$. There exists $\varepsilon_0=\varepsilon_0(v,\beta)>0$ such that the operator family
$$
(\wt {H}(\theta)\psi)(m)=\varepsilon(\hat{v}\ast \psi)(m)+2\cos 2\pi (\theta+m\cdot\alpha)\psi(m),\quad m\in \Z.
$$
satisfies EDL.
\end{prop}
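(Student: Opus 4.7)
The plan is to follow the Jitomirskaya--Kr\"uger strategy of \cite{Kruger}, which derived EDL for the almost Mathieu operator from the quantitative Anderson localization bounds of Avila--Jitomirskaya \cite{AJ}. As the authors of \cite{AJ} observe, their Theorem 5.1 is already stated for a general non-local analytic perturbation of the $2\cos 2\pi(\theta+m\alpha)$ diagonal, so only this input needs to be re-quoted, and the subsequent reduction to EDL in \cite{Kruger} uses no structural feature of the potential beyond analyticity and smallness.

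The first step is to invoke Theorem 5.1 of \cite{AJ} for the family $\wt H(\theta)$. With $\varepsilon_0=\varepsilon_0(v,\beta)$ chosen small enough that its hypotheses are satisfied in the weakly Liouville regime $\beta(\alpha)\le\beta$, the output is: for a.e.\ $\theta\in\T$, $\wt H(\theta)$ has pure point spectrum with a complete orthonormal set of eigenfunctions $\{\psi_j(\theta,\cdot)\}_{j\in\Z}$ satisfying SULE,
$$
|\psi_j(\theta,n)|\le C_\eta(\theta)\,e^{\eta|x_j(\theta)|-\gamma|n-x_j(\theta)|},\quad n\in\Z,
$$
for some $\gamma=\gamma(v,\beta)>0$, every $\eta>0$, and localization centers $x_j(\theta)\in\Z$. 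The same proof yields a quantitative bound on the $\theta$-measure of $\{C_\eta(\theta)>R\}$.

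The second step is the standard eigenfunction expansion. Pure point spectrum gives
$$
\sup_{t\in\R}|\<\delta_k,e^{-it\wt H(\theta)}\delta_\ell\>|\le\sum_{j\in\Z}|\psi_j(\theta,k)|\,|\psi_j(\theta,\ell)|,
$$
and direct summation against the SULE bound produces exponential decay in $|k-\ell|$ at a rate arbitrarily close to $\gamma$, modulo a $\theta$-dependent prefactor $C_\eta(\theta)^2$. Integrating in $\theta$ and splitting the integral at $\{C_\eta(\theta)\le e^{\eta|k-\ell|}\}$, with the trivial bound $|\<\delta_k,e^{-it\wt H(\theta)}\delta_\ell\>|\le 1$ on the complement, yields \eqref{eq_edl} after a final choice of $\eta$ in terms of $\gamma$.

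The main obstacle is precisely the integration step: the SULE prefactors $C_\eta(\theta)$ are not uniform in $\theta$, and a naive Fubini fails. The quantitative control of the measure of $\{C_\eta(\theta)>R\}$ --- exponentially small in $R$ --- is the nontrivial content, and is exactly what the argument of \cite{Kruger} extracts from the Green's function estimates in the proof of Theorem 5.1 of \cite{AJ}. Since that extraction is insensitive to the cosine-difference form of the Laplacian, once Theorem 5.1 is quoted in full generality the proof reduces to verifying its hypotheses for the present non-local dual operator, which holds for $\varepsilon<\varepsilon_0(v,\beta)$.
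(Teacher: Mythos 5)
Your proposal follows exactly the route the paper itself takes: Proposition \ref{prop_AJ_Kruger} is not proved from scratch but obtained by quoting Theorem 5.1 of \cite{AJ}, which is already stated for general non-local (long-range) analytic perturbations of the cosine diagonal, and then repeating the Jitomirskaya--Kr\"uger argument of \cite{Kruger} verbatim for the dual family $\wt H(\theta)$. Your outline of that argument (eigenfunction expansion plus quantitative control in $\theta$ of the localization prefactors before integrating) is consistent with it, so the proposal is correct and essentially identical in approach.
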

Recently, a multi-dimensional analogue has been obtained in \cite{EDL}:
\begin{prop}
\label{prop_EDL}
Fix $v\in C^{\omega}(\T^d)$ and suppose that $\alpha\in \dc$. There exists $\varepsilon_0=\varepsilon_0(v,\alpha)>0$ such that the operator family
$$
(\wt {H}(\theta)\psi)(m)=\varepsilon(\hat{v}\ast \psi)(m)+2\cos 2\pi (\theta+m\cdot\alpha)\psi(m),\quad m\in \Z^d
$$
satisfies EDL.
\end{prop}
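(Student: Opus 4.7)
My plan is to exploit the fact that $\wt H(\theta)$ is in the small-hopping regime: writing $\wt H(\theta)=D(\theta)+\eps W$, the diagonal part $D(\theta)\delta_m=2\cos 2\pi(\theta+m\cdot\alpha)\delta_m$ has $\{\delta_m\}$ as eigenbasis, while $W$ is the convolution with $\hat v$, whose matrix elements $\hat v(m-n)$ decay exponentially in $|m-n|$ because $v$ is analytic. For $\eps$ small this is the large-disorder regime for $\wt H$, and I would aim to prove Anderson localization together with enough quantitative control over the eigenfunctions to deduce the EDL bound \eqref{eq_edl}.

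The argument would have two phases. Phase one is a multi-scale (or block-resolvent KAM) analysis. The unperturbed eigenvalues $E_m(\theta)=2\cos 2\pi(\theta+m\cdot\alpha)$ satisfy separation estimates of the form $|E_m(\theta)-E_n(\theta)|\ge c'|m-n|^{-\tau'}$ outside a small-measure set of $\theta$; this follows from $\alpha\in\dc(c,\tau)$, which gives $\dist((m-n)\cdot\alpha,\Z)\ge c|m-n|^{-\tau}$, combined with the Lipschitz behavior of the cosine. Using the exponential decay of $\hat v$, one then inductively eliminates off-diagonal blocks of $\wt H(\theta)-z$ along growing scales, following the Bourgain--Goldstein scheme adapted to long-range models. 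Controlling double resonances in the multi-frequency setting requires semi-algebraic set estimates, since the continued-fraction tools of the one-frequency case are unavailable. The output should be a complete family of eigenpairs $(E_n(\theta),\psi_n(\theta))$ with SULE-type bounds $|\<\delta_k,\psi_n(\theta)\>|\le Ce^{-\gamma_0|k-c(n,\theta)|}$ on a full-measure set of $\theta$, together with quantitative Wegner-style control on the measure of the bad set at each scale.

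Phase two upgrades Anderson localization to EDL. Using the spectral expansion one estimates
$$
\sup_{t\in\R}|\<\delta_k,e^{-it\wt H(\theta)}\delta_\ell\>|\le \sum_n |\<\delta_k,\psi_n(\theta)\>|\,|\<\delta_\ell,\psi_n(\theta)\>|,
$$
and SULE controls the right-hand side by $Ce^{-\gamma|k-\ell|}$ once the centers $c(n,\theta)$ are shown to be in bijection with $\Z^d$ and sufficiently spread. Integrating in $\theta$ then absorbs the bad sets produced in phase one, provided their measure decays fast enough to preserve the exponential rate. The hard part, I expect, is precisely this coupling of $\theta$-measure tails with $|k-\ell|$-exponential decay: in the multi-frequency regime only the polynomial Diophantine bound is available, and keeping the rate genuinely exponential in $|k-\ell|$ after $\theta$-averaging---rather than merely sub-exponential---requires sharp bookkeeping of resonance scales via semi-algebraic methods, including the delicate step of ensuring that the sum over eigenfunctions whose centers lie far from both $k$ and $\ell$ does not destroy the decay produced by SULE.
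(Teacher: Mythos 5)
You should first note that the paper does not prove Proposition \ref{prop_EDL} at all: it is imported from Ge--You--Zhou \cite{EDL}, where EDL in expectation for this long-range family on $\Z^d$ is obtained from the ``reducibility side'', i.e.\ via Aubry duality and quantitative (Eliasson-type, perturbative in $\varepsilon$, hence $\varepsilon_0(v,\alpha)$) reducibility of the associated one-dimensional multi-frequency Schr\"odinger cocycle, combined with a criterion that reduces the expectation bound \eqref{eq_edl} to uniform control of the dual eigenfunctions and their localization centers. Your plan --- a direct multi-scale/KAM analysis of the long-range operator on $\Z^d$ in the small-hopping regime --- is therefore a genuinely different route from the one the paper relies on, and it would have to be carried out from scratch rather than borrowed.

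As written, the proposal has a genuine gap exactly at the step that constitutes the content of the proposition. First, phase one is not an off-the-shelf adaptation: the Bourgain--Goldstein scheme is tied to transfer matrices and positive Lyapunov exponents for one-dimensional nearest-neighbor operators, and has no direct analogue for long-range hopping on $\Z^d$ with a single phase $\theta$ sampled along $m\cdot\alpha$; what is available is perturbative $\Z^d$-type multi-scale analysis with semi-algebraic set estimates, and producing from it a complete system of eigenpairs with SULE bounds and centers in bijection with $\Z^d$ is itself a strong uniformity statement that does not follow from standard localization outputs (this uniform structure is precisely what \cite{EDL} extracts from reducibility). Second, and decisively, the passage from a.e.-$\theta$ localization to the $\theta$-integrated bound \eqref{eq_edl} with a genuinely exponential rate is only flagged as ``the hard part'' but never addressed: in multi-scale schemes the resonant $\theta$-set at scale $N$ is defined through Green's function or eigenvalue-separation conditions at many sites and typically has measure decaying only polynomially or sub-exponentially in $N$, while on that set the supremum in \eqref{eq_edl} can be of order $1$ for $|k-\ell|$ up to $N$; unless the bad sets are exponentially small in measure at every scale, the integrated decay degrades to sub-exponential and the proposition is not obtained. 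Since no mechanism is offered to achieve this exponential smallness (nor to control the sum over eigenfunctions with distant centers after averaging), the proposal is a plausible research program but not a proof; the efficient and currently known way to close it is the duality-plus-reducibility argument of \cite{EDL} that the paper quotes.
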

We should note that \cite{EDL} also contains a version of Proposition \ref{prop_AJ_Kruger} in the Diophantine setting, obtained by a different method from the ``reducibility'' side.
\section{Proof or Theorem \ref{main_abstract}}

Suppose that $\{\theta_j\}_{j\in \Z^d}\subset \T^1$ is some fixed sequence of phases. Denote by $L^{21}_{\dual}$ the space of functions $\Psi$ on $\T^1\times \Z^d$ with the norm
\beq
\label{L2_tilde}
\|\Psi\|_{L^{21}_{\dual}}=\l\{\int\limits_{\T^1}\l(\sum\limits_{m\in \Z^d}\l|\Psi(\theta+\theta_m;m)\r|\r)^2\,d\theta\r\}^{1/2}.
\eeq
The definition resembles the vector-valued space $L^2(\T;\ell^1(\Z^d))$. However, before calculating $\ell^1$-norm, we shear the argument of the $m$th component by $\theta_m$. Let also $P_N$ be the orthogonal projection onto $\mathrm{Span}\{\delta_n\colon n\in \Z^d, |n|\le N\}$ in $\ell^2(\Z^d)$, and $P_N^{\perp}=I-P_N$.
\begin{lemma}
\label{lemma_bound_tail}
Under the assumptions of Theorem $\ref{main_abstract}$, define $\wt Q(\theta,T)$ by \eqref{Q_T_def}. Then, the following bound holds:
\beq
\label{eq_bound_tail}
\l(\int_{\T}\|P_N^{\perp}\wt Q(\theta,T)\delta_k\|^2_{\ell^1(\Z)}\,d\theta\r)^{1/2}\le C_1 e^{-C_2 |N-|k||}.
\eeq
Moreover, the norm in the left hand side can be replaced by the norm in $L^{21}_{\dual}$ for any choice of $\{\theta_m\}$, with the same bounds.
\end{lemma}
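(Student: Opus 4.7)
The plan is to estimate the $\ell^1$-in-$n$ tail of the matrix elements $\<\delta_n,\wt Q(\theta,T)\delta_k\>$ in $L^2(\T)$ by unfolding the time integral, exploiting the fact that $\wt A(\theta)$ is diagonal in the position basis, and then applying EDL in expectation. I first insert a resolution of identity between $\wt A$ and each propagator to obtain
$$
\<\delta_n,\wt Q(\theta,T)\delta_k\> = \frac{1}{T}\int_0^T\sum_{m\in \Z^d}\overline{u^\theta_n(t,m)}\,\wt A_m(\theta)\,u^\theta_k(t,m)\,dt,
$$
where $u^\theta_j(t,m):=\<\delta_m,e^{-it\wt H(\theta)}\delta_j\>$ and $\wt A_m(\theta) = 2\sin 2\pi(m\cdot\alpha+\theta)$. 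Using $|\wt A_m|\le 2$, $|u^\theta_j(t,m)|\le\tau_{jm}(\theta):= \sup_{s\in\R}|u^\theta_j(s,m)|$, swapping the sum and integral, and bounding the time-averaged product by the product of suprema, I arrive at the pointwise estimate $|\<\delta_n,\wt Q(\theta,T)\delta_k\>|\le 2\sum_{m}\tau_{nm}(\theta)\tau_{km}(\theta)$.

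Setting $R_m(\theta):=\sum_{|n|>N}\tau_{nm}(\theta)$, the triangle inequality reduces the target quantity to $2\sum_m\|\tau_{km}R_m\|_{L^2(\T)}$. The main difficulty is that EDL only controls $\tau_{km}$ in $L^1(\T)$, not pointwise in $\theta$, so a direct use of Cauchy--Schwarz with $\|\tau_{km}\|_\infty\le 1$ is wasteful. My key observation is the elementary inequality $0\le\tau_{km}\le 1$, which gives $\tau_{km}^p\le\tau_{km}$ for any $p\ge 1$, hence $\|\tau_{km}\|_{L^p}^p\le \int_{\T}\tau_{km}\,d\theta \le Ce^{-\gamma|k-m|}$. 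Applying Cauchy--Schwarz in $\theta$ I obtain $\|\tau_{km}R_m\|_{L^2}^2\le \|\tau_{km}^2\|_{L^2}\|R_m^2\|_{L^2}$; the first factor is $(\int\tau_{km}^4)^{1/2}\le C^{1/2}e^{-\gamma|k-m|/2}$. For the second, I expand $R_m^4 = \sum_{n_1,\ldots,n_4}\prod_i\tau_{n_im}$ and apply generalized H\"older with exponents $p_i=4$ together with EDL to bound each summand by $C\prod_ie^{-\gamma|n_i-m|/4}$, producing $(\int R_m^4)^{1/2}\le C^{1/2}\bigl(\sum_{|n|>N}e^{-\gamma|n-m|/4}\bigr)^2$.

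Combining these estimates yields $\|\tau_{km}R_m\|_{L^2(\T)}\le C'e^{-\gamma|k-m|/4}\sum_{|n|>N}e^{-\gamma|n-m|/4}$; summing over $m$, swapping orders, and invoking the triangle inequality $|k-m|+|n-m|\ge|n-k|$ with a routine convolution estimate produces $\sum_m\|\tau_{km}R_m\|_{L^2}\le C''\sum_{|n|>N}e^{-\gamma'|n-k|}$. For $|k|\le N$ the constraint $|n|>N$ forces $|n-k|\ge N-|k|$, yielding the desired $C_1e^{-C_2(N-|k|)}$ bound; the complementary case $|k|>N$ is absorbed into the harmless constant matching $e^{-C_2\cdot 0}$.

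For the $L^{21}_\dual$ extension, the same triangle-inequality step reduces the estimate to a sum of per-$m$ terms of the form $\|\tau_{km}(\cdot+\theta_m)R_m(\cdot+\theta_m)\|_{L^2(\T)}$; each such norm is invariant under the $\theta_m$-shift by change of variables on the compact torus $\T$, so the identical bound holds uniformly in the choice of $\{\theta_m\}$. The main obstacle throughout is the absence of pointwise-in-$\theta$ control on $\tau_{nm}$, and the trick $\tau^p\le\tau$ paired with the double H\"older application is precisely what converts the scalar EDL bound into the $L^2$-norm estimate required here.
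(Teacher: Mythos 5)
Your argument is correct, and its engine is the same as the paper's: interpolate between the trivial unitarity bound $\tau\le 1$ and the $L^1(\T)$ information supplied by EDL by lowering powers (your $\tau^p\le\tau$ plays the role of the paper's $|x|^2\le 2|x|$ and $|ab|\le|a|^{1/2}|b|^{1/2}$), apply H\"older in $\theta$, and then sum exponentials over an intermediate site. The arrangement differs, though. The paper proves the stronger bound uniformly in $t$ (no Ces\`aro average), pulls the sum over the output site $n$ outside the $L^2(\T)$-norm by Minkowski at the very first step, and then estimates each term $\l(\int_\T|\<\delta_n,e^{i\wt H t}\wt A e^{-i\wt H t}\delta_k\>|^2 d\theta\r)^{1/2}$ separately. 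You instead majorize by the sup-in-$t$ quantities $\tau_{nm}(\theta)$, aggregate the tail into $R_m(\theta)=\sum_{|n|>N}\tau_{nm}(\theta)$, and control $\|\tau_{km}R_m\|_{L^2}$ via a fourth-moment/generalized H\"older computation; this is a perfectly valid (slightly heavier) alternative to the paper's per-$n$ Cauchy--Schwarz.

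Two caveats. First, your one-line treatment of the $L^{21}_{\dual}$ extension is garbled: in that norm the shear $\theta_n$ is attached to the output component $n$, not to the intermediate index $m$, so the sheared quantity cannot be written as $\|\tau_{km}(\cdot+\theta_m)R_m(\cdot+\theta_m)\|_{L^2}$ --- the shifts inside $R_m$ vary with $n$. The fix is immediate and is exactly the paper's point: apply Minkowski over $n$ (and $m$) before aggregating, change variables separately for each $n$, and bound $\|\tau_{nm}\tau_{km}\|_{L^2}\le\|\tau_{nm}\|_{L^4}\|\tau_{km}\|_{L^4}$. Second, your remark that the case $|k|>N$ is ``absorbed into the constant'' is not literally accurate, since then $|N-|k||=|k|-N$ may be large while your estimate only gives $O(1)$; but the paper's own final inequality has the same feature, and only the regime $N>|k|$ (indeed $N>2k$) is used in Corollary \ref{cor_conv2}, so this reflects the lemma's phrasing rather than a defect of your method.
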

\begin{proof}
We will prove a stronger statement: a uniform bound of \eqref{eq_bound_tail} without Ces\`aro averaging; that is, with $\wt Q(\theta,T)$ replaced by $e^{i\wt H(\theta)t}\wt A(\theta)e^{-i\wt H(\theta)t}$. Using the triangle inequality applied to the $\|\cdot\|_{L^{21}_{\dual}}$-norm of the sum 
$$
P_N^{\perp}e^{i\wt H(\theta)t}\wt A(\theta)e^{-i\wt H(\theta)t}\delta_k=
\sum_{|n|>N}\l\<\delta_n, e^{i\wt H(\theta)t}\wt A(\theta)e^{-i\wt H(\theta)t}\delta_k\r\>\delta_n,
$$
we can estimate \eqref{eq_bound_tail}:
\begin{multline}
\label{eq_many_triangles1}
\l(\int_{\T}\|P_N^{\perp}e^{i\wt H(\theta)t}\wt A(\theta)e^{-i\wt H(\theta)t}\delta_k\|_{\ell^1(\Z^d)}^2\,d\theta\r)^{1/2}
\le 
\sum_{|n|>N}\l(\int_{\T}\l|\< \delta_n,e^{i\wt H(\theta)t}\wt A(\theta)e^{-i\wt H(\theta)t}\delta_k\>\r|^2\,d\theta\r)^{1/2}\\
\le 2 \sum_{|n|>N}\l(\int_{\T}\l|\< e^{-i\wt H(\theta)t}\delta_n,\wt A(\theta)e^{-i\wt H(\theta)t}\delta_k\>\r|\,d\theta\r)^{1/2}
\\
\le 2\sum_{|n|>N}\l(\sum_{\ell\in \Z^d}\int_{\T}\l|\< e^{-i\wt H(\theta)t}\delta_n,\delta_{\ell}\>\<\wt A(\theta)\delta_{\ell},e^{-i\wt H(\theta)t}\delta_k\>\r|\,d\theta\r)^{1/2}.
\end{multline}
In the second inequality, we used the fact that $|\< \delta_n,e^{i\wt H(\theta)t}\wt A(\theta)e^{-i\wt H(\theta)t}\delta_k\>|\le 2$. Moreover, $\wt A(\theta)$ is a self-adjoint operator of multiplication by $2\sin(m\cdot\alpha+\theta)$, and therefore it can be removed from the last espression with an extra factor of $2$. We will also use the bound $|\<\delta_k,e^{-i\wt H(\theta) t}\delta_{\ell}\>|\le 1$ several times in the continued estimates:
\begin{multline*}
\\
\eqref{eq_many_triangles1}\le 4\sum_{|n|>N}\l(\sum_{\ell\in \Z^d}\int_{\T}\l|\< e^{-i\wt H(\theta)t}\delta_n,\delta_{\ell}\>\<\delta_{\ell},e^{-i\wt H(\theta)t}\delta_k\>\r|\,d\theta\r)^{1/2}
\\
 \le 4\sum_{|n|>N}\sum_{\ell\in \Z^d}\l(\int_{\T}\l|\< e^{-i\wt H(\theta)t}\delta_n,\delta_{\ell}\>\<\delta_{\ell},e^{-i\wt H(\theta)t}\delta_k\>\r|\,d\theta\r)^{1/2}
\\
\le 4\sum_{|n|>N}\sum_{\ell\in \Z^d}\l\{\int_{\T}|\<e^{-i \wt H(\theta)t}\delta_n,\delta_\ell\>|^{1/2}|\<e^{i \wt H(\theta)t}\delta_\ell,\delta_k\>|^{1/2}\,d\theta\r\}^{1/2}\\
\le 4 \sum_{|n|>N}\sum_{\ell\in \Z^d}\l\{\int_{\T}|\<\delta_\ell,e^{-i\wt H(\theta)t}\delta_n\>|\,d\theta \int_{\T}|\<\delta_\ell,e^{-i\wt H(\theta)t}\delta_k\>|\,d\theta\r\}^{1/4}\\
\le C_1 \sum_{|n|>N}\sum_{\ell\in \Z^d}e^{-C_2|n-\ell|}e^{-C_2 |\ell-k|}\le C_3 e^{-C_4 |N-|k||}.
\end{multline*}
The last inequalities follow from the EDL property. The proof for arbitrary $\theta_k$ is similar: note that we immediately use the triangle inequality, after which one can change variable in the integrand for each $n$ separately.
\end{proof}
\begin{cor}
\label{cor_conv2}
Let $\varphi_q(\theta)=e^{2\pi i \theta q}$, $\Psi=\varphi_q\delta_k\in  L^{21}_{\dual}$. Then $\wt Q\Psi\in L^{21}_{\dual}$, and
$$
\|\wt \cQ(T)\Psi-\wt \cQ\Psi\|_{L^{21}_{\dual}}\to 0\quad\textrm{ as } \quad T\to +\infty.
$$
\end{cor}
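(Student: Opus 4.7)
The plan is a standard cut-off argument combining the uniform tail bound from Lemma \ref{lemma_bound_tail} with the $\widetilde{\mathfrak H}$-strong convergence from Proposition \ref{prop_K}. The key observation is that since $\wt Q(\theta,T)$ acts fiberwise on $\ell^2(\Z^d)$ and $|\varphi_q(\theta)|=1$, the scalar factor $\varphi_q$ drops out of every absolute value in the definition of $\|\cdot\|_{L^{21}_{\dual}}$. Thus Lemma \ref{lemma_bound_tail} applies verbatim to $\wt \cQ(T)\Psi$ and gives
\[
\|P_N^\perp\, \wt \cQ(T)\Psi\|_{L^{21}_{\dual}}\le C_1 e^{-C_2|N-|k||}
\]
uniformly in $T$.

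The first step is to verify that the limit $\wt \cQ\Psi$ lies in $L^{21}_{\dual}$. By Proposition \ref{prop_K}, for a.e.\ $\theta$ we have $\wt Q(\theta,T)\delta_k \to \wt Q(\theta)\delta_k$ in $\ell^2$, and hence componentwise on $\Z^d$. Applying Fatou's lemma first to the $m$-sum and then to the $\theta$-integral transfers the uniform-in-$T$ bound to the limit, giving $\wt \cQ\Psi\in L^{21}_{\dual}$ together with the same tail estimate $\|P_N^\perp \wt \cQ\Psi\|_{L^{21}_{\dual}}\le C_1 e^{-C_2|N-|k||}$.

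The second step handles the finite head. The inner sum in $\|\cdot\|_{L^{21}_{\dual}}$, restricted to $|m|\le N$, has at most $(2N+1)^d$ nonzero terms, so Cauchy--Schwarz combined with the translation invariance of Lebesgue measure on $\T$ (which absorbs the shifts $\theta_m$ for each $m$ separately) yields
\[
\|P_N(\wt \cQ(T)-\wt \cQ)\Psi\|_{L^{21}_{\dual}}^2\le (2N+1)^d\,\|(\wt \cQ(T)-\wt \cQ)\delta_k\|_{\widetilde{\mathfrak H}}^2,
\]
where on the right $\delta_k$ is interpreted as the constant section $\theta\mapsto \delta_k$ of $\widetilde{\mathfrak H}$. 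This tends to $0$ as $T\to +\infty$ by the $\widetilde{\mathfrak H}$-strong convergence of Proposition \ref{prop_K}.

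Given $\varepsilon>0$, I would first choose $N=N(k,\varepsilon)$ so that the two $P_N^\perp$-contributions are each below $\varepsilon/3$ (by the tail bound above), and then choose $T$ so large that the finite-head term is below $\varepsilon/3$; the triangle inequality concludes. The only non-routine step — and the one I would treat as the main obstacle — is the verification that $\wt \cQ\Psi\in L^{21}_{\dual}$ in the first place: this depends essentially on the uniform-in-$T$ form of Lemma \ref{lemma_bound_tail} coupled with the Fatou transfer, and without it the cut-off decomposition would not even be well-posed.
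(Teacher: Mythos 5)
Your proposal is correct and follows essentially the same route as the paper's own proof: a cut-off by $P_N$, with the tail controlled uniformly in $T$ by Lemma \ref{lemma_bound_tail} and the finite head controlled via Cauchy--Schwarz by the $L^2(\T;\ell^2(\Z^d))$-convergence of Proposition \ref{prop_K}. Your explicit Fatou argument for transferring the uniform-in-$T$ tail bound to the limit $\wt\cQ\Psi$ is merely a detail the paper leaves implicit.
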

\begin{proof}
First, let us note that $\wt Q\Psi$ is well defined as an element of $\ell^2(\T;\ell^2(\Z^d))$, since $\wt \cQ$ is a bounded operator. Now, from Lemma \ref{lemma_bound_tail} (the factor $\varphi_q$ does not change \eqref{eq_bound_tail}), assuming, say, $N>2k$:
$$
\|\wt Q\Psi\|_{L^{21}_{\dual}}\le \|P_N\wt Q\Psi\|_{L^{21}_{\dual}}+\|(1-P_N)\wt Q\Psi\|_{L^{21}_{\dual}}\le N^{1/2} \|\wt Q\Psi\|_{L^2(\T;\ell^2(\Z))}+C_1(\Psi)e^{-C_2(\Psi) N}<+\infty.
$$
Similarly, one can prove second claim:
$$
\|\wt Q(T)\Psi-\wt Q\Psi\|_{\wt L^{21}}\le N^{1/2}\|\wt Q(T)\Psi-\wt Q\Psi\|_{L^2(\T;\ell^2(\Z))}+C_1(\Psi)e^{-C_2(\Psi)N}.
$$
The first term in the right hand side converges to zero, since Proposition \ref{prop_K} guarantees convergence in $L^2(\T;\ell^2(\Z))$.
\end{proof}

\noindent {\it Proof of Theorem \ref{main_abstract}.} Recall the definition:
$$
Q(x,T)=\frac{1}{T}\int_0^T e^{i H(x) t}A e^{-i H(x)t}\,dt.
$$
Since $\|A\|\le 2$, we have $\|Q(x,T)\|_{\ell^2(\Z)\to \ell^2(\Z)}\le 2$. Hence, it would be sufficient to show
$$
Q(x,T)\delta_p\to Q(x)\delta_p,\quad p\in \Z^d.
$$
for all basis elements and for almost every $x\in \T^d$. Let 
$$
w_T(x)=Q(x,T)\delta_p\in \ell^2(\Z),\quad w(x)=Q(x)\delta_p\in \ell^2(\Z),
$$
where we are assuming that $x$ belongs to the full measure set of $\T^d$ on which $Q(x)$ exists (as a fiber of $\cQ$). Let $w_T(x;n)$ denote the $n$th component of $w_T$, $n\in \Z$. Consider the Fourier transforms
$$
\hat w_T(x;\theta)=\sum\limits_{n\in\Z}e^{2\pi i n\theta}w_T(x;n),\quad \hat w(x;\theta)=\sum\limits_{n\in\Z}e^{2\pi i n\theta}w(x;n).
$$
Perform also the inverse Fourier transform in the first argument, and denote the results by $\wt w$:
$$
\wt w_T(m;\theta)=\int_{\T^d}\sum\limits_{n\in\Z}e^{-2\pi i m \cdot x}e^{2\pi i n\theta}w_T(x;n)\,dx,\quad \wt w (m;\theta)=\int_{\T^d}\sum\limits_{n\in\Z}e^{-2\pi i m \cdot x}e^{2\pi i n\theta}w(x;n)\,dx.
$$
Here $m\in \Z^d$, $\theta\in \T$. In the next computation, $\sup$ denotes $\ess\sup$. We have
\begin{multline}
\label{eq_final}	
\sup_{x\in\T^d}\|w_T(x)-w(x)\|_{\ell^2(\Z)}^2=\sup_x \sum_{n\in \Z}|w_T(x;n)-w(x;n)|^2=\sup_x \int_{\T}|\hat w_T(x;\theta)-\hat w(x;\theta)|^2\,d\theta\\
\le \int_{\T}\l(\sup_{x}|{\hat w}_T(x,\theta)-{\hat w}(x,\theta)|\r)^2\,d\theta
\le \int_{\T}\l(\sum_{m\in \Z^d}|{\wt w}_T(m,\theta)-{\wt w}(m,\theta)|\r)^2\,d\theta
\\=\int_{\T}\l(\sum_{m}\l|(\mathcal U w_T)(\theta+m\alpha,m)-(\mathcal U w)(\theta+m\alpha,m)\r|\r)^2\,d\theta\\
=\|\mathcal U w_T-\mathcal U w\|^2_{L^{21}_{\dual}},
\end{multline}
where $\mathcal U$ denotes the duality transformation \eqref{u_def} and the phases $\theta_n$ in the definition of $\wt L^{21}$ are chosen in the form $\theta_m=m\cdot\alpha$. However,  
$$
(\mathcal U w)(\theta,m)=\wt Q(\theta) e^{2\pi i p\theta}\delta_q(m),\quad (\mathcal U w_T)(\theta,m)=\wt Q(\theta,T)e^{2\pi i p \theta}\delta_q(m).
$$
Hence, the right hand side of \eqref{eq_final} converges to zero from Corollary \ref{cor_conv2}.
\section{Acknowledgements} The research was supported by the NSF DMS--1846114 grant ``CAREER: Quantum Systems with Deterministic Disorder''.

The author would like to thank Qi Zhou for pointing out an incorrect inequality in the previous version of Lemma 3.1.
\bibliography{ballistic} 
\bibliographystyle{plain}
\end{document}